\author{R. R. de Araujo\thanks{Instituto Federal de Educa\c c\~ao, Ci\^encia e Tecnologia de S\~ao Paulo, CEP 11533-160, Brazil, robson.ricardo@ifsp.edu.br} \qquad F. C. Polcino Milies\thanks{Instituto de Matem\'atica e Estat\'istica, Universidade de S\~ao Paulo, Caixa Postal 66281, CEP-05315-970,  
 Brazil, polcino@ime.usp.br.} \qquad R. A. Ferraz\thanks{Instituto de Matem\'atica e Estat\'istica, Universidade de S\~ao Paulo, Caixa Postal 66281, CEP-05315-970,  
 Brazil, raul@ime.usp.br.}}
\title{Minimal group codes over alternating groups}
\date{}
\newtheorem{teorema}{Theorem}[section]
\newtheorem{ex}{Example}[section]
\newtheorem{corol}{Corollary}[section]
\newtheorem{defin}{Definition}[section]
\newtheorem{obs}{Remark}[section]
\def\linha#1{%
  \hbox to \hsize{%
      \vbox{\centering #1}}%
      \vspace{4mm}}
\begin{document}

\maketitle

\begin{abstract} In this work we show that every minimal code in a semisimple group algebra $\mathbb{F}_qG$ is essential if $G$ is a simple group. Since the alternating group $A_n$ is simple if $n=3$ or $n\geq 5$, we present some examples of minimal codes in  $\mathbb{F}_qA_n$. For this purpose, if $char(\mathbb{F}_q)> n$, we present the Wedderburn-Artin decomposition of $\mathbb{F}_qS_n$ and $\mathbb{F}_qA_n$ and explicit some of the centrally primitive idempotents of $\mathbb{F}_qS_n$ and $\mathbb{F}_qA_n$.
\end{abstract}

\thanks{The authors were partially supported by FAPESP, Proc. 2015/09162-9.}

\section{Introduction}

Let $\mathbb{F}_q$ be a finite field with $q$ elements and $G$ be a finite group. A \textit{group code} is any ideal of the group algebra $\mathbb{F}_qG$. Group codes were introduced independently by S. D. Berman and  MacWilliams \cite{berman1,berman2,macwi} and since then they have extensively been used to obtain interesting error-correcting codes \cite{bernal2009, essen2018, idem17, frp, ferraz07, left}.

A minimal code is any minimal ideal in the algebra $\mathbb{F}_qG$. If the characteristic of $\mathbb{F}_q$ not divides the order of $G$, the group algebra $\mathbb{F}_qG$ is semisimple. In this case $\mathbb{F}_qG$ has a complete set of centrally primitive idempotents, which are the generators of the minimal ideals in this algebra. In \cite{idem17} the authors define the concept of \textit{essential idempotent} and investigate its use in coding theory. A primitive idempotent $e\in \mathbb{F}_qG$ is said to be essential if $e\hat{H}=0$ for every subgroup $H\neq \{1\}$ in $G$, where $\hat{H}=|H|^{-1}\sum_{h\in H}  h$. A group code is said to be an \textit{essential code} if it is generated by an essential idempotent. In \cite{idem17} it is shown that minimal codes in $\mathbb{F}_qG$ are repetition codes whenever $G$ is an abelian non-cyclic group. Also, it is proved that, if $C$ is a code in $\mathbb{F}_{2^f}G$ of dimension $k$ and length $n=2^k-1$, the code $C$ is essential if and only if $C$ is a simplex code. In \cite{essen2018} it is shown that a binary linear code of constant weight are either generated by an essential idempotent or is a repetition code.

In this work we show that every minimal code in a semisimple group algebra $\mathbb{F}_qG$ is essential if $G$ is a simple group. Important examples of simple groups are given by the alternating groups $A_n$, for $n=3$ or $n\geq 5$. Motivated by the fact that all minimal codes in $\mathbb{F}_qA_n$ are essential if $n=3$ or $n\geq 5$ and $char(\mathbb{F}_q)>n$, we present a study about some minimal codes in this algebra. Adapting the work of \cite{giambruno} for finite fields, we present the Wedderburn-Artin decomposition of the semisimple algebra $\mathbb{F}_qA_n$ and describe some centrally primitive idempotents of it. For this purpose, firstly we also present the Wedderburn-Artin decomposition and give a complete set of centrally primitive idempotents of the semisimple algebra $\mathbb{F}_qS_n$. In the end we explicit the parameters of all minimal ideals in $\mathbb{F}_5A_3$, $\mathbb{F}_7A_3$, $\mathbb{F}_5A_4$ and $\mathbb{F}_7A_4$ and present the generator matrices of some of them. We highlight that a minimal code of $\mathbb{F}_5A_3$ is the best linear code with length $n=3$ and $k=2$ over $\mathbb{F}_5$. In turn, the other non-trivial minimal codes obtained in this work have not the best known minimum distance but they are not bad.

This paper is organized as follows. In Section \ref{sec1} we present known definitions and facts about group algebras, group codes and symmetric groups. In Section \ref{sec2} we give the Wedderburn-Artin decomposition of $\mathbb{F}_qS_n$ and $\mathbb{F}_qA_n$ in the semisimple case, present a complete set of centrally primitive idempotents of $\mathbb{F}_qS_n$ and explicit some centrally primitive idempotents of $\mathbb{F}_qA_n$. Finally, in Section \ref{sec3} we present some results and examples about minimal codes in $\mathbb{F}_qA_n$.

\section{Preliminaries}\label{sec1}

In this section, firstly we present some basic concepts about finite group algebras, codes and group codes. Secondly we give some definitions and facts about the representation of the symmetric group $S_n$.
\

\subsection{Group algebras and group codes}

In this work, consider $\mathbb{F}=\mathbb{F}_q$ a finite field with $q=p^f$ elements, where $p$ is a prime number and $f\geq 1$ is a natural number, and $G$ a finite group. A \textit{group algebra} is a ring of the form $\mathbb{F}G =\left\{ \sum_{g\in G} a_gg~|~a_g\in \mathbb{F} \right\}$ where the sum and the product are given, respectively, by
$$\sum_{g\in G} a_gg + \sum_{g\in G} b_gg = \sum_{g\in g} (a_g+b_g)g$$
and
$$\left(\sum_{g\in G} a_gg \right)\left(\sum_{g\in G} b_gg \right) = \left(\sum_{g,h\in G} a_gb_hgh \right).$$
If the characteristic of $\mathbb{F}_q$ does not divide the order of $G$, Maschke's Theorem guarantees that the group algebra $\mathbb{F}G$ is semisimple and Wedderburn-Artin Theorem says that
\begin{equation}
\mathbb{F}G\cong M_{n_1}(\mathbb{F}_{(1)})\oplus M_{n_2}(\mathbb{F}_{(2)})\oplus\ldots \oplus M_{n_s}(\mathbb{F}_{(s)}),
\end{equation}
where $\mathbb{F}_{(i)}$ is a extension field of $\mathbb{F}$ and $M_{n_i}(\mathbb{F}_{(i)})$ is the set of all square matrices of order $n_i$ over $\mathbb{F}_{(i)}$ (see \cite[Section 3.4]{milies2002}). Besides, for $i\in\{1,2,\ldots,s\}$, there exists a centrally primitive idempotent $e_i$ such that $M_{n_i}(\mathbb{F}_{(i)})\cong \mathbb{F}Ge_i$.

In coding theory, a \textit{$q$-ary linear code} of length $n$ and dimension $k$ is a $k$-dimensional vector subspace $C$ of $\mathbb{F}_q^n$. If there exists a group $G$ of order $n$ such that $C=\sigma(I)$ for some (left) ideal $I$ of $\mathbb{F}_qG$ and some isomorphism $\sigma:\mathbb{F}_qG\longrightarrow\mathbb{F}_q^n$ which maps $G$ to the standard basis of $\mathbb{F}^n$, we say that $C$ is a (left) \textit{group code}. For example, a cyclic code $C$ of length $n$ over $ \mathbb{F}_q$ (a linear code is cyclic if $(c_2,\ldots,c_n,c_1)\in C$ for all $(c_1,c_2,\ldots,c_n)\in C$) is a group code because it can be viewed as an ideal of $\mathbb{F}_qC_n$, where $C_n=\langle x|x^n=1\rangle$ is the cyclic group of order $n$. A criterion to decide when a linear code is a group code can be seen in \cite{bernal2009}.

Every (left) ideal $I$ of $\mathbb{F}_qG$ provides a group code and, so, $I$ can be called (left) \textit{group code} too. If $G$ is abelian (cyclic), we say that $I$ is an abelian (cyclic) group code. A group code $I$ in $\mathbb{F}_qG$ is said to be \textit{minimal} if $I$ is a minimal bilateral ideal of the group algebra $\mathbb{F}_qG$. The length $n$ and the dimension $k$ of the group code $I\lhd \mathbb{F}_qG$ are given, respectively, by the order of the group $G$ and the dimension of $I$ as a $\mathbb{F}_q$-subspace of $\mathbb{F}_qG$.

We define the \textit{Hamming distance} between two elements $\alpha = \sum_{g\in G} \alpha_gg$ and $\beta = \sum_{g\in G}\beta_g g$ of $\mathbb{F}_qG$, with $\alpha_g$ and $\beta_g$ in $\mathbb{F}_q$ for all $g\in G$, to be the number $d(\alpha,\beta)=|\{g\in G~|~\alpha_g \neq \beta_g \}|$. The \textit{Hamming weight} or \textit{minimum distance} of a group code $I\lhd \mathbb{F}_qG$ is denoted by $d=w(I)$ and is given by
\begin{equation}
w(I) := \min \{ d(\alpha,0)~|~\alpha\neq 0, ~\alpha\in I\}.
\end{equation}
The length $n$, the dimension $k$ and the Hamming weight $d$ of a group code $I$ in $\mathbb{F}_qG$ are their \textit{parameters}. In this case, we can say that $I$ is a $q$-ary $[n,k,d]$-code.

\subsection{The symmetric group}

In the following we summarize some concepts and results about partitions, tableaux and the symmetric group $S_n$. A detailed theory about this can be found in \cite{james}.

Let $n$ be a positive integer number. A \textit{partition} of $n$ is a finite sequence $\lambda=(\lambda_1,\lambda_2,\ldots,\lambda_k)$ such that $\sum_{i=1}^k \lambda_i = n$ and $\lambda_1\geq \lambda_2\geq \ldots \geq \lambda_k>0$. The finite set of all partitions of $n$ is denoted by $P(n)$. Every partition $\lambda=(\lambda_1,\lambda_2,\ldots,\lambda_k) \in P(n)$ can be associated with a \textit{Young diagram} $D_\lambda$, which is a diagram given by the concatenation of squares arranged in $k$ rows $r_i$, $i\in \{1,2,\ldots,k\}$, each of them with $\lambda_i$ squares. Filling the squares of $D_\lambda$ with distinct numbers of $\{1,2,\ldots,n\}$ someone obtains a structure called \textit{Young tableau}, denoted by $T_\lambda = T_\lambda(a_{ij})$, where $a_{ij}$ denotes the number of $\{1,2,\ldots,n\}$ put inside the square of row $i$ and column $j$. The transposition of $D_\lambda$ produces a new Young diagram, which is associated with other partition $\lambda^\prime=(\lambda_1^\prime,\lambda_2^\prime,\ldots,\lambda_l^\prime)\in P(n)$. The partition $\lambda^\prime$ is called the \textit{conjugate partition} of $\lambda$.

\begin{ex} Consider $n=9$ and the partition $\lambda=(3,2,2,2)\in P(9)$. Below it is presented the Young diagram $D_\lambda$ and an example of Young tableau $T_\lambda$:
	\begin{equation}\label{lambda9}
	D_\lambda = \yng(3,2,2,2) \qquad T_\lambda = \young(187,63,24,95)
	\end{equation}
	The transpose diagram of $D_\lambda$ is
	$$
	D_{\lambda^\prime} = \yng(4,4,1)$$
	which implies that the conjugate partition of $\lambda$ is $\lambda^\prime = (4,4,1) \in P(9)$.
\end{ex}

In a Young tableau $T_\lambda$, if the numbers of each row and of each column are arranged in increasing order, $T_\lambda$ is called a \textit{standard Young tableau} of $\lambda$. The number of standard Young tableaux associated to $\lambda$ is denoted by $d_\lambda$. For instance, the Young tableau $T_\lambda$ in Example \ref{lambda9} is not standard because the numbers in the first row are not arranged in increasing order.

Let $D_\lambda$ a Young diagram associated to $\lambda\in P(n)$. For the square in the position $i\times j$ of $D_\lambda$ we define the \textit{hook number} of the position $i\times j$ by $h_{ij}(\lambda)=\lambda_i+\lambda_j^\prime-(i+j)+1$. The \textit{hook number formula} is a known equality that expresses $d_\lambda$ in function of all $h_{ij}(\lambda)$ as following:
\begin{equation}
d_\lambda = \frac{n!}{\prod h_{ij}(\lambda)}.
\end{equation}

\begin{ex} Consider $\lambda=(3,2,2,2)\in P(9)$. A standard Young tableau associated to $\lambda$ is the following:
	$$
	T_\lambda(a_{ij})=\young(135,24,86,97)
	$$
	Since $\lambda^\prime=(4,4,1)$, the hook number associated to the position $a_{12}=3$ is $h_{12}(\lambda)=3+4-(1+2)+1=5$. Note that $h_{12}(\lambda)$ corresponds to the number of squares to the right of $3$ plus the number of squares below $3$ plus 1. By the hook number formula, the number of standard Young tableaux associated to $\lambda$ is $d_\lambda = 9!/(6.5.1.4.3.3.2.2.1) = 84$.
\end{ex}

Let $S_n$ be the symmetric group. We say that a permutation $\pi\in S_n$ have \textit{cycle type} $\lambda=(\lambda_1\,\lambda_2,\ldots,\lambda_k)\in P(n)$ if the cycles of $\pi$ have length $\lambda_1$, $\lambda_2$, $\ldots$, $\lambda_k$. For instance, in $S_5$ the permutation $\pi=(1~4~5)(2~3)$ has cycle type $\lambda=(3,2)$. There is a one-to-one correspondence among the conjugation classes of $S_n$, the permutations in $S_n$ of same cycle type and the ordinary irreducible representations of $S_n$. Moreover, each partition $\lambda\in P(n)$ is associated to a conjugation class $\mathcal{C}_{\lambda}^{S_n}$ and to a $S_n$-character $\chi_\lambda^{S_n}$ such that $\chi_\lambda^{S_n}(1)=d_\lambda$.

\section{Semisimple group algebras $\mathbb{F}_qS_n$ and $\mathbb{F}_qA_n$}\label{sec2}

Consider $\mathbb{F}_q=\mathbb{F}_{p^f}$ a finite field of characteristic $p>0$ and $n$ a positive integer number such that $p>n$. In this case, the group algebras $\mathbb{F}_qS_n$ and $\mathbb{F}_qA_n$ are semisimple because $p$ does not divides $n!$ nor $n!/2$. In this section we present the Wedderburn-Artin decomposition of $\mathbb{F}_qS_n$ and $\mathbb{F}_qA_n$, give a complete set of centrally primitive idempotents of $\mathbb{F}_qS_n$ and some centrally primitive idempotents of $\mathbb{F}_qA_n$. The theory presented here on $\mathbb{F}_qS_n$ and $\mathbb{F}_qA_n$ are obtained adapting results on $\mathbb{Q}S_n$ and $\mathbb{Q}A_n$ available in \cite{james, giambruno}.


\subsection{Decomposition and centrally primitive idempotents in $\mathbb{F}_qS_n$}
Consider a partition $\lambda=(\lambda_1,\lambda_2,\ldots,\lambda_k)\in P(n)$. Let $T_\lambda=T_\lambda(a_{ij})$ be a Young tableau associated to $\lambda$ and $\lambda^\prime=(\lambda_1^\prime,\lambda_2^\prime,\ldots,\lambda_l^\prime)$ be the conjugate partition of $\lambda$. Define the \textit{row stabilizer} and the \textit{column stabilizer} of $T_\lambda$, respectively, as
$$R_{T_\lambda} = \prod_{i=1}^k S_{\lambda_i}(a_{i1},\ldots,a_{i\lambda_i})
\qquad and \qquad C_{T_\lambda} = \prod_{i=1}^l S_{\lambda_i^\prime}(a_{1i},\ldots,a_{\lambda_i^\prime i})
$$
where $S_r(b_1,\ldots,b_r)$ denotes the set of all permutation of the sequence $(b_1,\ldots,b_r)$. It follows from \cite[Equation 1.5.4]{james} that
\begin{equation}\label{idempotente}
e_{T_\lambda} = \frac{d_\lambda}{n!}\sum_{p\in R_{T_\lambda}}\sum_{q\in C_{T_\lambda}} sgn(q)qp
\end{equation}
is a primitive idempotent of $\mathbb{F}_qS_n$, where $sgn(q)$ denotes the signature of the permutation $q$. Also, denote
\begin{equation}
\overline{e}_{T_\lambda} = \frac{d_\lambda}{n!}\sum_{p\in R_{T_\lambda}}\sum_{q\in C_{T_\lambda}} sgn(q)pq
\end{equation}
which is a primitive idempotent of $\mathbb{F}_qS_n$ too.

The Wedderburn-Artin decomposition of $\mathbb{F}_qS_n$ is well-known. It is given by
\begin{equation}\label{decSn}
\mathbb{F}_qS_n = \bigoplus_{\lambda \in P(n)} J_\lambda
\end{equation}
where $J_\lambda$ is a bilateral ideal isomorphic to $M_{d_\lambda}(\mathbb{F}_q)$, for all $\lambda\in P(n)$ (see \cite{james, giambruno}). In the following theorem we explicit the complete set of centrally primitive idempotents of $\mathbb{F}_q$, that is, the generators of $J_\lambda$. The proof is similar to that made in \cite[Lemma3]{giambruno} in the case of $\mathbb{Q}S_n$. In the following, consider $T_1,T_2,\ldots,T_{d_\lambda}$ the standard tableaux associated to $\lambda$ ordered by an order that satisfies the property
\begin{equation}
    i>j \Longleftrightarrow T_i>T_j \Longleftrightarrow T_i^\prime < T_j^\prime \Longleftrightarrow e_{T_i}e_{T_j}=0~and~\overline{e}_{T_i^\prime}\overline{e}_{T_j^\prime}=0,
\end{equation}
where $T_i^\prime$ denote the standard tableau associated to $\lambda^\prime$. Consider $J_\lambda\simeq M_{d_\lambda}(\mathbb{F}_q)$ and $J_{\lambda^\prime}\simeq M_{d_\lambda}(\mathbb{F}_q)$ the ideals corresponding to $\lambda$ and $\lambda^\prime$ in the decomposition (\ref{decSn}) of $\mathbb{F}_qS_n$. Denote by $e_\lambda^{S_n}$ and $e_{\lambda^\prime}^{S_n}$ the centrally primitive idempotents of $\mathbb{F}_qS_n$ that are generators of $J_\lambda$ and $J_\lambda^\prime$, respectively. Then:

\begin{teorema}\label{lemaorto} The centrally primitive idempotents of $\mathbb{F}_qS_n$ associated of $\lambda\in P(n)$ and $\lambda^\prime$, respectively, are given by
\begin{equation}\label{ideSN}
e_\lambda^{S_n}=1-\prod_{i=1}^{d_\lambda} (1-e_{T_{i}})\qquad and \qquad e_{\lambda^\prime}^{S_n}=1-\prod_{i=1}^{d_\lambda} (1-\overline{e}_{T_{i}^\prime}).
\end{equation}
\end{teorema}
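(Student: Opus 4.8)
The plan is to prove that the right-hand side of (\ref{ideSN}) equals the identity element $1_{J_\lambda}$ of the Wedderburn component $J_\lambda$, which is exactly the centrally primitive idempotent generating $J_\lambda$, the case of $e_{\lambda^\prime}^{S_n}$ being entirely symmetric. First I would record that each $e_{T_i}$, being a primitive idempotent whose left ideal $\mathbb{F}_qS_n e_{T_i}$ affords the irreducible character $\chi_\lambda^{S_n}$, lies in the single block $J_\lambda$; consequently every term of $1-\prod_{i=1}^{d_\lambda}(1-e_{T_i})$ contains at least one factor $e_{T_i}$ and the whole expression lies in $J_\lambda$. Using the central decomposition $1=\sum_{\mu}1_{J_\mu}$ and the orthogonality of the blocks, on each block $\mu\neq\lambda$ every factor $1-e_{T_i}$ acts as $1_{J_\mu}$, whereas on $J_\lambda$ it acts as $1_{J_\lambda}-e_{T_i}$; hence $\prod_{i}(1-e_{T_i})=\prod_{i}(1_{J_\lambda}-e_{T_i})+(1-1_{J_\lambda})$. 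Therefore the theorem reduces to the single identity $\prod_{i=1}^{d_\lambda}(1_{J_\lambda}-e_{T_i})=0$ inside $J_\lambda\cong M_{d_\lambda}(\mathbb{F}_q)$.

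To establish this identity I would work with the $d_\lambda$-dimensional module on which $M_{d_\lambda}(\mathbb{F}_q)$ acts and exploit that each $e_{T_i}$ is primitive, hence a rank-one idempotent: write $e_{T_i}\colon v\mapsto \varphi_i(v)\,u_i$ for a nonzero vector $u_i$ and functional $\varphi_i$ normalised by $\varphi_i(u_i)=1$. The hypothesis $e_{T_i}e_{T_j}=0$ for $i>j$ translates into $\varphi_i(u_j)=0$ for $i>j$, so the matrix $[\varphi_i(u_j)]$ is upper unitriangular; in particular it is invertible and $\{u_1,\ldots,u_{d_\lambda}\}$ is a basis of the module. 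The key computation is then that $(1_{J_\lambda}-e_{T_i})u_i=u_i-\varphi_i(u_i)u_i=0$, while $(1_{J_\lambda}-e_{T_i})u_j=u_j$ whenever $j<i$, because $\varphi_i(u_j)=0$ in that range.

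With these two facts the product telescopes on each basis vector. Evaluating $\prod_{i=1}^{d_\lambda}(1_{J_\lambda}-e_{T_i})$, read right to left, on a fixed $u_k$: every factor $1_{J_\lambda}-e_{T_i}$ with $i>k$ lies to the right of $1_{J_\lambda}-e_{T_k}$ and fixes $u_k$, so the partial product reaching that factor still equals $u_k$; the factor $1_{J_\lambda}-e_{T_k}$ then annihilates it, and all remaining factors send $0$ to $0$. Hence the product kills every basis vector and is therefore the zero map, proving $\prod_i(1_{J_\lambda}-e_{T_i})=0$ and thus $e_\lambda^{S_n}=1_{J_\lambda}$. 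The same argument applied to the idempotents $\overline{e}_{T_i^\prime}$, together with the stated orthogonality $\overline{e}_{T_i^\prime}\overline{e}_{T_j^\prime}=0$ for $i>j$ and the fact that they lie in $J_{\lambda^\prime}$, yields the companion formula for $e_{\lambda^\prime}^{S_n}$.

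I expect the main obstacle to be the first, structural step: justifying cleanly that every $e_{T_i}$ belongs to the one component $J_\lambda$ and is primitive, hence rank-one, there, so that the problem legitimately collapses to a computation in a single matrix algebra. Once that reduction is in place the remaining linear algebra is routine, the only point requiring care being to keep track of the order of the factors so that the one-sided orthogonality $e_{T_i}e_{T_j}=0$ (for $i>j$) matches the telescoping direction.
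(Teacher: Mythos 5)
Your proposal is correct, but it proves the identity by a genuinely different route than the paper. The paper works intrinsically in the group algebra: starting from $J_\lambda=\bigoplus_i\mathbb{F}_qS_ne_{T_i}=\mathbb{F}_qS_ne_\lambda^{S_n}$ it writes $e_\lambda^{S_n}=\sum_i a_ie_{T_i}$, determines the summands inductively by right-multiplying by $e_{T_k}$ and invoking the one-sided orthogonality to get $a_ke_{T_k}=(1-e_{T_1})\cdots(1-e_{T_{k-1}})e_{T_k}$, and then telescopes the sum into $1-\prod_i(1-e_{T_i})$. You instead pass to the split block $J_\lambda\cong M_{d_\lambda}(\mathbb{F}_q)$, realize each $e_{T_i}$ as a rank-one operator $v\mapsto\varphi_i(v)u_i$ on the simple module, observe that the unitriangularity of $[\varphi_i(u_j)]$ (forced by $e_{T_i}e_{T_j}=0$ for $i>j$) makes the $u_i$ a basis, and kill that basis vector by vector with the correctly ordered product $\prod_i(1_{J_\lambda}-e_{T_i})$. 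Both arguments consume exactly the same inputs (the $e_{T_i}$ are $d_\lambda$ primitive idempotents lying in the single block $J_\lambda$, with one-sided orthogonality in the chosen order), and your order-of-factors bookkeeping matches the paper's convention, so the telescoping direction is right. What your version buys is a transparent structural reason for the vanishing of $\prod_i(1-e_{T_i})$ on the block --- it is a general fact about $d$ rank-one idempotents in $M_d(F)$ with triangular orthogonality --- at the cost of explicitly invoking the splitness $J_\lambda\cong M_{d_\lambda}(\mathbb{F}_q)$ and the faithfulness of the standard module; the paper's version stays inside the group algebra and never needs the matrix realization, only the direct-sum decomposition of $J_\lambda$ into the left ideals $\mathbb{F}_qS_ne_{T_i}$.
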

\begin{proof} We prove the equality on left in (\ref{ideSN}). The proof of the other equality is analogous and it will be omitted. Let $e_\lambda^{S_n}$ denote the centrally primitive idempotent of $\mathbb{F}_qS_n$ associated to $\lambda$. Since $\oplus_{i=1}^{d_\lambda} \mathbb{F}_qS_n e_{T_i}=J_\lambda=\mathbb{F}_qS_n e_\lambda^{S_n}$, there exist $a_1,\ldots,a_{d_\lambda}\in \mathbb{F}_qS_n$ such that
\begin{equation}\label{auxiliar}
    e_\lambda^{S_n} = \sum_{i=1}^{d_\lambda}  a_ie_{T_i}.
\end{equation}
Multiplying (\ref{auxiliar}) on the right by $e_{T_1}$ we obtain $e_\lambda^{S_n}e_{T_1} = a_1e_{T_1}$. Since $e_{T_1}\in \mathbb{F}_qS_n e_\lambda^{S_n}$, then $e_{T_1}=\alpha_1 e_\lambda^{S_n}$ for some $\alpha_1\in \mathbb{F}_q S_n$. As $e_\lambda^{S_n}$ is a central idempotent, we get $e_{T_1}=\alpha_1 e_\lambda^{S_n}= \alpha_1 (e_\lambda^{S_n})^2 = e_\lambda^{S_n}\alpha_1 e_\lambda^{S_n} = e_\lambda^{S_n} e_1 = a_1e_{T_1}$. Therefore $e_{T_1}=a_1e_{T_1}$. With similar approach, inductively it can be proved that, for any $k=2,\ldots,d_\lambda$,
$$a_ke_{T_k}= (1-e_{T_1})\ldots (1-e_{T_{k-1}})e_{T_k}.$$
So, recursively we obtain
\begin{equation*}
\begin{split}
e_\lambda^{S_n} = \sum_{i=1}^{d_\lambda}  a_ie_{T_i} =  1-(1-e_1)+\sum_{k=2}^{d_\lambda} (1-e_{T_1})\ldots (1-e_{T_{k-1}})e_{T_k}\\
= 1 - \prod_{i=1}^{d_\lambda} (1-e_{T_{i}})
\end{split}    
\end{equation*}
which proves the result.\end{proof}

\subsection{Decomposition and centrally primitive idempotents in $\mathbb{F}_qA_n$}
Let $\pi$ be a permutation in the alternating group $A_n\subseteq S_n$. Let $\lambda=(\lambda_1,\lambda_2,\ldots,\lambda_k)$ be the partition of $n$ associated to the conjugation class of $\pi$ in $S_n$ and $\lambda^\prime$ be its conjugate partition. Denote by $\mathcal{C}_\pi$ the conjugation class of $\pi$ in $A_n$. There are only two possibilities for the conjugation class $\mathcal{C}_\lambda^{S_n}$ of $\pi$ in $S_n$: or it is equal to $\mathcal{C}_\pi$ (case 1) or it is equal to a disjoint union of two classes $\mathcal{C}_{\pi+}$ and $\mathcal{C}_{\pi-}$ in $A_n$ with equal cardinality (case 2). The case 2 occurs if and only if $\lambda_i$ is an odd number for all $i\in\{1,2,\ldots,k\}$ and $\lambda_1>\lambda_2>\ldots>\lambda_k$. In this case, denote by $\mathcal{C}_{\pi+}$ the conjugation class of $A_n$ containing the permutation $(1,\ldots,\lambda_1)(\lambda_{1}+1,\ldots,\lambda_1+\lambda_2)\ldots(\sum_{i=1}^{k-1}\lambda_{i}+1,\ldots,n)$ and by $\mathcal{C}_{\pi-}=\mathcal{C}_\lambda^{S_n}\backslash \mathcal{C}_{\pi+}$. Since each partition $\lambda$ of $n$ is biunivocally associated to an irreducible character $\chi_\lambda^{S_n}$ in $S_n$, the restriction of this character to $A_n$ occurs is two cases too. In the first case, if $\lambda\neq \lambda^\prime$, then there is a unique irreducible $A_n$-character $\chi_\lambda$ associated to $\lambda$ and $\chi_\lambda=\chi_\lambda^{S_n}|_{A_n}$, which is irreducible. In the second case, if $\lambda=\lambda^\prime$, there are two irreducible $A_n$-characters $\chi_{\lambda+}$ and $\chi_{\lambda-}$ associated to $\lambda$ and the characters is given by the following:

\begin{teorema}[\cite{james}, Theorem 2.5.7]\label{theoremAn} Consider $\lambda=(\lambda_1,\lambda_2,\ldots,\lambda_k)=\lambda^\prime \in P(n)$ and $h(\lambda):=(h_{11}(\lambda),\ldots,h_{rr}(\lambda))$, where $h_{ii}(\lambda)$ is the hook number associated to the $i$-th diagonal of the symmetric Young tableau $D_\lambda$. Consider $\alpha \in A_n$ and $\mu\in P(n)$ the partition associated to $\alpha$. So:
	\begin{enumerate}
		\item If $\mu\neq h(\lambda)$, then $\chi_{\lambda\pm}(\alpha) = \chi_\lambda^{S_n}(\alpha)/2.$
		\item If $\mu= h(\lambda)$ and $\alpha \in \mathcal{C}_{\alpha+}$, $$\chi_{\lambda\pm}(\alpha)=\frac{1}{2}\left((-1)^{(n-r)/2}\pm \sqrt{(-1)^{(n-r)/2}\prod_{i=1}^r h_{ii}(\lambda)}\right).$$
		\item If $\mu= h(\lambda)$ and $\alpha \in \mathcal{C}_{\alpha-}$, $$\chi_{\lambda\pm}(\alpha)=\frac{1}{2}\left((-1)^{(n-r)/2}\mp \sqrt{(-1)^{(n-r)/2}\prod_{i=1}^r h_{ii}(\lambda)}\right).$$
	\end{enumerate}
\end{teorema}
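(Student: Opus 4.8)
The plan is to obtain all three formulas from Clifford theory for the index-two subgroup $A_n\leq S_n$, together with one genuinely nontrivial evaluation of the difference character. First I would record the standard dichotomy: tensoring with the sign character sends $\chi_\lambda^{S_n}$ to $\chi_{\lambda'}^{S_n}$, so $\chi_\lambda^{S_n}$ is $\mathrm{sgn}$-invariant exactly when $\lambda=\lambda'$; in that self-conjugate case the restriction $\chi_\lambda^{S_n}|_{A_n}$ is reducible and splits as $\chi_{\lambda+}+\chi_{\lambda-}$, where the two irreducible constituents are interchanged by conjugation by any odd permutation. This already gives $\chi_{\lambda+}(\alpha)+\chi_{\lambda-}(\alpha)=\chi_\lambda^{S_n}(\alpha)$ for every $\alpha\in A_n$, so the whole problem reduces to computing the difference $\delta:=\chi_{\lambda+}-\chi_{\lambda-}$.

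Next I would narrow down the support of $\delta$. If the $S_n$-class of $\alpha$ does not split in $A_n$, then conjugation by an odd permutation keeps $\alpha$ inside its $A_n$-class, and since $\chi_{\lambda-}={}^{g}\chi_{\lambda+}$ for odd $g$ we get $\chi_{\lambda+}(\alpha)=\chi_{\lambda-}(\alpha)$; hence $\delta$ vanishes off the splitting classes, and on each splitting pair $\mathcal{C}_{\mu+},\mathcal{C}_{\mu-}$ it satisfies $\delta(\mathcal{C}_{\mu+})=-\delta(\mathcal{C}_{\mu-})$. The splitting classes are precisely those whose cycle type $\mu$ has distinct odd parts, and the classical bijection between such $\mu$ and self-conjugate partitions (given by the principal hook lengths) attaches to $\lambda$ the distinguished type $\mu=h(\lambda)$. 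The real content of the theorem is then the assertion that $\delta$ vanishes on \emph{every} splitting class with $\mu\neq h(\lambda)$ and is nonzero only on $\mu=h(\lambda)$; the first statement yields Case~1, while the value on $h(\lambda)$ yields Cases~2 and~3.

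The crux is therefore this last computation, which I would carry out through the intertwiner $\phi\in GL(V_\lambda)$ of the representation $\rho$ affording $\chi_\lambda^{S_n}$, determined by $\phi\rho(g)\phi^{-1}=\mathrm{sgn}(g)\rho(g)$ and guaranteed to exist by self-conjugacy. Since $\phi$ commutes with the $A_n$-action it acts as opposite scalars on $V_{\lambda\pm}$, so the twisted class function $g\mapsto\mathrm{tr}(\rho(g)\phi)$ is a fixed nonzero multiple of $\delta$ on $A_n$, and $\phi^2$ is a scalar by Schur's Lemma. Evaluating this twisted trace by the Murnaghan--Nakayama recursion, stripping the principal hooks of the self-conjugate diagram one at a time, is what makes it vanish on every splitting type other than $h(\lambda)$ and produces, on $h(\lambda)$, the product $\prod_{i=1}^{r}h_{ii}(\lambda)$ of diagonal hook lengths together with the sign $(-1)^{(n-r)/2}$ (this sign already accounts for $\chi_\lambda^{S_n}$ itself being $(-1)^{(n-r)/2}$ on $h(\lambda)$, since the $i$-th principal hook has equal arm and leg, hence leg length $(h_{ii}(\lambda)-1)/2$, and these sum to $(n-r)/2$). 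Combining $\chi_{\lambda+}+\chi_{\lambda-}=(-1)^{(n-r)/2}$ with $\chi_{\lambda+}-\chi_{\lambda-}=\pm\sqrt{(-1)^{(n-r)/2}\prod_{i=1}^{r}h_{ii}(\lambda)}$ then gives the stated values, the overall sign being fixed by the convention distinguishing $\mathcal{C}_{\alpha+}$ from $\mathcal{C}_{\alpha-}$ and explaining the swap between Cases~2 and~3.

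I expect the main obstacle to be exactly this twisted-trace evaluation: both proving that $\delta$ is supported on the single class $h(\lambda)$ (rather than spread over all distinct-odd-part types, which the Clifford bookkeeping alone does not rule out) and pinning down the normalization of the twisted trace, equivalently the scalar $\phi^2$, that produces the square root $\sqrt{(-1)^{(n-r)/2}\prod_{i=1}^{r}h_{ii}(\lambda)}$, along with the correct sign convention. Since this delicate step is precisely the assertion of Theorem~2.5.7 of \cite{james}, in the write-up I would either invoke it directly or reproduce the Murnaghan--Nakayama computation for the principal hooks; everything else is the formal Clifford-theoretic reduction described above.
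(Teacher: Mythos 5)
The paper offers no proof of this statement at all: it is quoted verbatim as Theorem~2.5.7 of the James--Kerber reference \cite{james}, so there is no in-paper argument to compare against. Your outline is the standard proof of that classical result and is correct as a roadmap: the Clifford-theoretic dichotomy for the index-two subgroup $A_n\leq S_n$, the identity $\chi_{\lambda+}+\chi_{\lambda-}=\chi_\lambda^{S_n}|_{A_n}$, the observation that the difference character $\delta$ is supported on splitting classes and is antisymmetric on each split pair, and the reduction of everything to a twisted-trace (Murnaghan--Nakayama) evaluation on the principal-hook class $h(\lambda)$. You also correctly flag the one genuinely nontrivial point, namely that $\delta$ vanishes on all splitting classes other than $h(\lambda)$ and that the value there is $\pm\sqrt{(-1)^{(n-r)/2}\prod_{i=1}^{r}h_{ii}(\lambda)}$; the sign bookkeeping via leg lengths summing to $(n-r)/2$ is right. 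The only caveat is that, as you acknowledge, you do not actually carry out that crux computation but defer it to the same Theorem~2.5.7 you are asked to prove, so what you have written establishes the formal Clifford reduction but ultimately rests on the same citation the paper itself makes. For the purposes of this paper that is entirely adequate, since the authors treat the statement as imported background rather than as something to be proved.
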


\begin{obs} It follows from of \cite[Theorems 1.2.17 and 2.4.3]{james} that, for $\lambda\in P(n)$, $\chi_{\lambda}^{S_n}(\alpha)\in\mathbb{Z}$ for every $\alpha\in A_n$ and $\chi_\lambda^{S_n}(1)=d_\lambda$.
\end{obs}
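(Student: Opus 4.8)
The statement collects two classical facts about the ordinary irreducible characters of $S_n$, and the plan is to prove each for all of $S_n$ and then note that the passage to $A_n$ is free. Indeed, since $A_n\subseteq S_n$, the assertion $\chi_\lambda^{S_n}(\alpha)\in\mathbb{Z}$ for every $\alpha\in A_n$ is an immediate consequence of the stronger claim that $\chi_\lambda^{S_n}(\sigma)\in\mathbb{Z}$ for \emph{every} $\sigma\in S_n$, while the value at the identity does not depend on whether $1$ is viewed in $S_n$ or in $A_n$. Thus the real content is (i) the integrality of the character values of $S_n$, and (ii) the equality $\chi_\lambda^{S_n}(1)=d_\lambda$.

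For (i) I would argue in two steps. First, each value $\chi_\lambda^{S_n}(\sigma)$ is the trace of a matrix of finite order, hence a sum of roots of unity and therefore an algebraic integer. Second, I would establish rationality by a Galois argument: if $m$ denotes the exponent of $S_n$, then every $\tau\in\mathrm{Gal}(\mathbb{Q}(\zeta_m)/\mathbb{Q})$ sends $\zeta_m\mapsto\zeta_m^k$ for some $k$ with $\gcd(k,m)=1$, and consequently $\tau\bigl(\chi_\lambda^{S_n}(\sigma)\bigr)=\chi_\lambda^{S_n}(\sigma^k)$. The decisive combinatorial input is that, because $\mathrm{ord}(\sigma)\mid m$, the hypothesis $\gcd(k,m)=1$ forces $\gcd(k,\mathrm{ord}(\sigma))=1$, so $\sigma$ and $\sigma^k$ share the same cycle type and are therefore conjugate in $S_n$; hence $\chi_\lambda^{S_n}(\sigma^k)=\chi_\lambda^{S_n}(\sigma)$. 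Every character value is then fixed by the whole Galois group, i.e. rational, and being simultaneously an algebraic integer it lies in $\mathbb{Z}$. This is precisely the mechanism behind \cite[Theorem 1.2.17]{james}.

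For (ii) I would identify $\chi_\lambda^{S_n}$ with the character afforded by the Specht module $S^\lambda$ attached to $\lambda$, so that $\chi_\lambda^{S_n}(1)=\dim S^\lambda$. The dimension is then read off from the standard basis theorem: the standard polytabloids, indexed by the standard Young tableaux of shape $\lambda$, form a basis of $S^\lambda$, whence $\dim S^\lambda$ equals the number $d_\lambda$ of such tableaux --- equivalently the value produced by the hook length formula recalled earlier. This is the content of \cite[Theorem 2.4.3]{james}.

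I do not expect a genuine obstacle here, since both halves are standard and the restriction to $A_n$ is automatic; the only step carrying any real weight is the stability of the cycle type under the map $\sigma\mapsto\sigma^k$ and its coupling with the Galois action in (i). The replacement of $\mathbb{Q}$ by $\mathbb{F}_q$ raises no issue, because the Remark speaks of the ordinary (characteristic-zero) characters $\chi_\lambda^{S_n}$ and of the purely combinatorial integer $d_\lambda$, whose reductions modulo $p$ remain well behaved under the standing hypothesis $p>n$.
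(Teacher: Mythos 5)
Your argument is correct. The paper itself offers no proof of this Remark --- it is stated as a direct citation of James--Kerber --- so there is no ``paper proof'' to match step for step; what matters is whether your reconstruction is sound, and it is. The Galois mechanism you describe for (i) (algebraic integrality of character values, plus the fact that $\gcd(k,\mathrm{ord}(\sigma))=1$ preserves cycle type, hence conjugacy class, so every value is fixed by $\mathrm{Gal}(\mathbb{Q}(\zeta_m)/\mathbb{Q})$) is the standard proof that $S_n$ has a rational, hence integral, character table, and your observation that the restriction to $A_n$ is then automatic is exactly right. For (ii), identifying $\chi_\lambda^{S_n}(1)$ with $\dim S^\lambda$ and counting standard polytabloids is the content of the cited basis theorem. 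One small remark: you could streamline the argument by noting that the standard polytabloids give a $\mathbb{Z}$-basis of the Specht module on which $S_n$ acts by integer matrices, so all character values of $S_n$ are integers \emph{directly} as traces of integer matrices; this makes (i) a corollary of the same basis theorem you invoke for (ii) and avoids the Galois detour entirely. Either route is fine, and your closing comment that the ambient field $\mathbb{F}_q$ is irrelevant here (the characters in the Remark are the ordinary ones and $d_\lambda$ is a purely combinatorial integer) correctly identifies the only point where confusion could arise.
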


The last theorem allows us to obtain the Wedderburn-Artin decomposition of $\mathbb{F}_qA_n$. For this, consider
$$\Gamma = \{\{\lambda,\lambda^\prime\}~:~\lambda\in P(n), \lambda\neq \lambda^\prime \}$$
and 
$$\Delta = \left\{\lambda=\lambda^\prime \in P(n)~:~ \sqrt{p_\lambda} \in\mathbb{F}_q\right\}$$
where $h(\lambda)=(h_{11}(\lambda),\ldots,h_{rr}(\lambda))$ when $\lambda=\lambda^\prime$ and \begin{equation*}p_\lambda=(-1)^{(n-r)/2}\prod_{i=1}^r h_{ii}(\lambda).\end{equation*}
So, the Wedderburn-Artin decomposition of $\mathbb{F}_qA_n$ is given by
\begin{equation}\label{decAn}
\mathbb{F}_qA_n = \left(\bigoplus_{\gamma=\{\lambda,\lambda^\prime\} \in \Gamma} I_\gamma \right)\oplus \left(\bigoplus_{\lambda=\lambda^\prime \not\in \Delta} I_\lambda \right)\oplus \left(\bigoplus_{\lambda\in \Delta} (I_{\lambda+}\oplus I_{\lambda-}) \right),
\end{equation}
where
\begin{itemize}
	\item $I_\gamma \simeq M_{d_\lambda}(\mathbb{F}_q)$, if $\gamma=\{\lambda,\lambda^\prime\} \in \Gamma$;
	\item $I_\lambda \simeq M_{d_\lambda/2}(\mathbb{F}_{q^2})$, if $\lambda=\lambda^\prime\not\in \Delta$;
	\item $I_{\lambda\pm} \simeq M_{d_\lambda/2}(\mathbb{F}_q)$, if $\lambda=\lambda^\prime \in \Delta$.
\end{itemize}

\begin{obs} The Wedderburn-Artin decomposition of $\mathbb{Q}A_n$ was stated in \cite[Theorem 2]{giambruno} as a consequence of Theorem \ref{theoremAn}. So, the decomposition of $\mathbb{F}_qA_n$ given in (\ref{decAn}) also can be obtained as consequence of results presented in \cite[Section 3]{weintraub}.
\end{obs}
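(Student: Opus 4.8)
The plan is to realize the decomposition (\ref{decAn}) as the reduction modulo $p$ of the classical complex picture, leaning on two general facts about semisimple algebras over a finite field. First, by Wedderburn's little theorem every finite division ring is a field, so every Schur index over $\mathbb{F}_q$ equals $1$; consequently the field of definition of an absolutely irreducible representation coincides with its character field. Second, the simple components of $\mathbb{F}_qA_n$ are in bijection with the orbits of the absolutely irreducible $A_n$-characters under the Frobenius action of $\mathrm{Gal}(\overline{\mathbb{F}_q}/\mathbb{F}_q)$, and a single orbit of common degree $m$ whose values generate $\mathbb{F}_{q^d}$ over $\mathbb{F}_q$ contributes exactly the block $M_m(\mathbb{F}_{q^d})$. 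Since $p>n$ guarantees semisimplicity and, moreover, that $p$ divides none of the hook numbers $h_{ii}(\lambda)\le n$, the characteristic-$p$ character values are the reductions of the characteristic-$0$ ones of Theorem \ref{theoremAn}; in particular $p_\lambda\not\equiv 0$ in $\mathbb{F}_p$, so $\sqrt{p_\lambda}$ is a well-defined nonzero element and the condition defining $\Delta$ is meaningful.

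First I would recall the restriction pattern from $S_n$ to the index-two subgroup $A_n$. By Clifford theory, $\chi_\lambda^{S_n}|_{A_n}$ is irreducible exactly when $\chi_\lambda^{S_n}\otimes\mathrm{sgn}\not\cong\chi_\lambda^{S_n}$, that is, when $\lambda\neq\lambda^\prime$ (because $\chi_\lambda^{S_n}\otimes\mathrm{sgn}=\chi_{\lambda^\prime}^{S_n}$); in that case $\chi_\lambda^{S_n}|_{A_n}\cong\chi_{\lambda^\prime}^{S_n}|_{A_n}$, since the two $S_n$-characters differ by a character trivial on $A_n$. When $\lambda=\lambda^\prime$ the restriction splits as $\chi_{\lambda+}+\chi_{\lambda-}$ with each constituent of degree $d_\lambda/2$. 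This yields precisely the three families indexing (\ref{decAn}): the unordered pairs $\gamma=\{\lambda,\lambda^\prime\}\in\Gamma$, and the self-conjugate partitions $\lambda=\lambda^\prime$, later separated according to membership in $\Delta$.

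Next I would compute the character field of each absolutely irreducible $A_n$-character and read off the block. For $\gamma=\{\lambda,\lambda^\prime\}$ the Remark gives $\chi_\lambda^{S_n}(\alpha)\in\mathbb{Z}$, so every value of the restriction lies in $\mathbb{F}_p\subseteq\mathbb{F}_q$; the Frobenius orbit is a singleton and, with Schur index $1$, the block is $I_\gamma\cong M_{d_\lambda}(\mathbb{F}_q)$. For a self-conjugate $\lambda$, Theorem \ref{theoremAn} shows the only irrational values of $\chi_{\lambda\pm}$ involve $\sqrt{p_\lambda}$, so their character field is $\mathbb{F}_q(\sqrt{p_\lambda})$. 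If $\lambda\in\Delta$ then $\sqrt{p_\lambda}\in\mathbb{F}_q$, each of $\chi_{\lambda+},\chi_{\lambda-}$ is $\mathbb{F}_q$-rational with singleton orbit, and we obtain two blocks $I_{\lambda\pm}\cong M_{d_\lambda/2}(\mathbb{F}_q)$. If $\lambda\notin\Delta$ then $\mathbb{F}_q(\sqrt{p_\lambda})=\mathbb{F}_{q^2}$ and the nontrivial Frobenius automorphism sends $\sqrt{p_\lambda}\mapsto(\sqrt{p_\lambda})^q=-\sqrt{p_\lambda}$, which by parts $(2)$ and $(3)$ of Theorem \ref{theoremAn} interchanges $\chi_{\lambda+}$ and $\chi_{\lambda-}$; the two characters form one orbit with character field $\mathbb{F}_{q^2}$, giving a single block $I_\lambda\cong M_{d_\lambda/2}(\mathbb{F}_{q^2})$.

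Finally I would confirm completeness by a dimension count over $\mathbb{F}_q$. Each $\gamma$ contributes $d_\lambda^2$, while each self-conjugate $\lambda$ contributes $2(d_\lambda/2)^2=d_\lambda^2/2$, whether it lies in $\Delta$ (two copies of $M_{d_\lambda/2}(\mathbb{F}_q)$) or not (one copy of $M_{d_\lambda/2}(\mathbb{F}_{q^2})$, of $\mathbb{F}_q$-dimension $2(d_\lambda/2)^2$). Using $d_\lambda=d_{\lambda^\prime}$ and the identity $\sum_{\lambda\in P(n)}d_\lambda^2=n!$ read off from the decomposition (\ref{decSn}), the grand total equals $\tfrac12\sum_{\lambda\neq\lambda^\prime}d_\lambda^2+\tfrac12\sum_{\lambda=\lambda^\prime}d_\lambda^2=\tfrac12\,n!=|A_n|=\dim_{\mathbb{F}_q}\mathbb{F}_qA_n$, so no simple component is missing. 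I expect the main obstacle to be the descent bookkeeping in characteristic $p$: one must justify that the complex values of Theorem \ref{theoremAn} transfer to $\overline{\mathbb{F}_p}$ without collapse---this is exactly where $p>n$ (forcing $p_\lambda\not\equiv 0$) is essential---and that ``$p_\lambda$ is a square in $\mathbb{F}_q$'' is the precise dividing line between a split self-conjugate block over $\mathbb{F}_q$ and a single block over $\mathbb{F}_{q^2}$.
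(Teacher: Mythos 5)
Your argument is correct, and it is essentially the derivation that the paper's remark only gestures at: the paper offers no proof here, deferring to \cite{giambruno} for the rational case and to \cite[Section 3]{weintraub} for the descent machinery over $\mathbb{F}_q$, whereas you actually carry out that descent. The ingredients you use are exactly the ones the remark has in mind: Clifford theory for the index-two subgroup $A_n\leq S_n$ (splitting according to $\lambda\neq\lambda^\prime$ versus $\lambda=\lambda^\prime$), triviality of Schur indices over finite fields so that each Frobenius orbit of absolutely irreducible characters of common degree $m$ and character field $\mathbb{F}_{q^d}$ yields a block $M_m(\mathbb{F}_{q^d})$, and the explicit values of Theorem \ref{theoremAn} to identify the character field of $\chi_{\lambda\pm}$ as $\mathbb{F}_q(\sqrt{p_\lambda})$, so that membership in $\Delta$ is precisely the dividing line between two blocks $M_{d_\lambda/2}(\mathbb{F}_q)$ and one block $M_{d_\lambda/2}(\mathbb{F}_{q^2})$. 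Your closing dimension count against $|A_n|=n!/2$ via $\sum_\lambda d_\lambda^2=n!$ from (\ref{decSn}) is a worthwhile completeness check that the paper omits. The only points deserving a word more of care are the ones you already flag: since $p>n$ and $p\nmid|A_n|$, the Brauer characters are the reductions of the ordinary ones (the decomposition matrix is the identity), and $p_\lambda$ is a product of hook numbers $\leq n$, hence a unit mod $p$, so the condition ``$p_\lambda$ is a square in $\mathbb{F}_q$'' defining $\Delta$ is well posed and the Frobenius $x\mapsto x^q$ genuinely interchanges $\chi_{\lambda+}$ and $\chi_{\lambda-}$ when $\lambda=\lambda^\prime\notin\Delta$. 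In short: same route as the one the paper cites, but written out in full; nothing is missing.
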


Usually, a complete set $\{e_i\}$ of centrally primitive idempotents of $\mathbb{F}_qA_n$ can be calculated using the character table of $A_n$ by the formula
$$e_{i} = \frac{\chi_i(1)}{n!/2}\sum_{g\in A_n} \chi_i(g^{-1})g,$$
for each character $\chi_i$ of $A_n$. In addition, computational tools can also help with this task. Expanding the possibilities, in the following theorem we present the centrally primitive idempotents associated to the components $I_\gamma$ of $\mathbb{F}_qA_n$ in the decomposition (\ref{decAn}), where $\gamma=\{\lambda,\lambda^\prime\}$ if $\lambda\neq \lambda^\prime$. This result and its proof are adapted from \cite[Theorem 4]{giambruno}, where the authors calculate the centrally primitive idempotents of $\mathbb{Q}A_n$ for the case $\lambda\neq \lambda^\prime$. In \cite[Theorem 5]{giambruno} it is given the centrally primitive idempotents associated to the other minimal ideals in the decomposition of $\mathbb{Q}A_n$, but the techniques used there can not be adapted to our case.

\begin{teorema}For each $\gamma=\{\lambda,\lambda^\prime \}\in \Gamma$, a centrally primitive idempotent that generates $I_\gamma$ is given by
\begin{equation}
e_\lambda := e_\lambda^{S_n}+e_{\lambda^\prime}^{S_n}.
\end{equation}
\end{teorema}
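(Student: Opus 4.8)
The plan is to realize both idempotents through the character-theoretic formula already recorded for $\mathbb{F}_qA_n$, and to use the fact that the sign character links $\lambda$ and $\lambda'$. First I would observe that, exactly as for $A_n$, the centrally primitive idempotent $e_\lambda^{S_n}$ of the split component $J_\lambda\simeq M_{d_\lambda}(\mathbb{F}_q)$ admits the expression $e_\lambda^{S_n}=\frac{d_\lambda}{n!}\sum_{g\in S_n}\chi_\lambda^{S_n}(g^{-1})g$. This is legitimate over $\mathbb{F}_q$ because $p>n$ makes $n!$ invertible, because the values $\chi_\lambda^{S_n}(g)$ are integers by the preceding Remark (so they reduce modulo $p$ unambiguously), and because every simple component of $\mathbb{F}_qS_n$ is the split algebra $M_{d_\lambda}(\mathbb{F}_q)$, so the ordinary idempotent reduces to the centrally primitive idempotent of the reduced algebra. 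The same formula applies to $e_{\lambda'}^{S_n}$ with $\chi_{\lambda'}^{S_n}$ and $d_{\lambda'}$.

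Next I would exploit two classical identities. Conjugate partitions have transposed hook lengths, hence $d_{\lambda'}=d_\lambda$; and tensoring by the sign representation carries the Specht module of $\lambda$ to that of $\lambda'$, so $\chi_{\lambda'}^{S_n}=\mathrm{sgn}\cdot\chi_\lambda^{S_n}$. Adding the two idempotents and collecting the coefficient of each $g$, the contribution is $\chi_\lambda^{S_n}(g^{-1})\bigl(1+\mathrm{sgn}(g)\bigr)$: it vanishes for odd $g$ and doubles for even $g$. Therefore the terms supported outside $A_n$ cancel and
\[
e_\lambda:=e_\lambda^{S_n}+e_{\lambda'}^{S_n}=\frac{2d_\lambda}{n!}\sum_{g\in A_n}\chi_\lambda^{S_n}(g^{-1})g=\frac{d_\lambda}{n!/2}\sum_{g\in A_n}\chi_\lambda^{S_n}(g^{-1})g,
\]
an element genuinely lying in $\mathbb{F}_qA_n$.

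Finally I would identify this with an $A_n$-idempotent. Because $\lambda\neq\lambda'$, the discussion preceding Theorem~\ref{theoremAn} gives that $\chi_\lambda:=\chi_\lambda^{S_n}|_{A_n}$ is the irreducible $A_n$-character attached to $\gamma=\{\lambda,\lambda'\}$, with $\chi_\lambda(1)=d_\lambda$ and with split simple component $I_\gamma\simeq M_{d_\lambda}(\mathbb{F}_q)$. Substituting $\chi_\lambda$ and $|A_n|=n!/2$ turns the last display into $e_\lambda=\frac{\chi_\lambda(1)}{|A_n|}\sum_{g\in A_n}\chi_\lambda(g^{-1})g$, which is exactly the centrally primitive idempotent generating $I_\gamma$ produced by the formula quoted before the theorem; since $I_\gamma$ is simple, $e_\lambda$ is centrally primitive.

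The main obstacle is not the bookkeeping but the passage from characteristic zero to $\mathbb{F}_q$: one must check that reducing the rational idempotent $\frac{\chi(1)}{|G|}\sum\chi(g^{-1})g$ modulo $p$ still yields the centrally primitive idempotent over $\mathbb{F}_q$. This is what forces the hypothesis $p>n$ and the integrality of $\chi_\lambda^{S_n}$, together with the fact that the relevant components $J_\lambda$, $J_{\lambda'}$ and $I_\gamma$ are all split matrix algebras over $\mathbb{F}_q$ --- precisely the regime $\lambda\neq\lambda'$ handled here. For self-containedness I would also note the alternative verification that $e_\lambda$ is idempotent and central: $e_\lambda^{S_n}$ and $e_{\lambda'}^{S_n}$ are orthogonal central idempotents of $\mathbb{F}_qS_n$, so their sum is a central idempotent there, and it survives in the subalgebra $\mathbb{F}_qA_n$, which is exactly the fixed ring of the sign-twist automorphism $g\mapsto\mathrm{sgn}(g)g$ interchanging $e_\lambda^{S_n}$ and $e_{\lambda'}^{S_n}$.
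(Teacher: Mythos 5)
Your proof is correct, but it follows a genuinely different route from the paper's. You work entirely with characters: you realize $e_\lambda^{S_n}$ and $e_{\lambda'}^{S_n}$ via the classical formula $\frac{\chi(1)}{|G|}\sum_g\chi(g^{-1})g$, invoke $\chi_{\lambda'}^{S_n}=\mathrm{sgn}\cdot\chi_\lambda^{S_n}$ and $d_{\lambda'}=d_\lambda$ so that the odd-permutation coefficients cancel in the sum, and then recognize the result as the character-formula idempotent of the irreducible restriction $\chi_\lambda^{S_n}|_{A_n}$, which generates the split component $I_\gamma\simeq M_{d_\lambda}(\mathbb{F}_q)$. The paper instead argues module-theoretically, following Giambruno--Jespers: it writes $e_\lambda^{S_n}=\sum_i a_ie_{T_i}$ and $e_{\lambda'}^{S_n}=\sum_i\overline{a}_i\overline{e}_{T_i'}$ in terms of the Young symmetrizers of Theorem~\ref{lemaorto}, shows each pair $a_ke_{T_k}+\overline{a}_k\overline{e}_{T_k'}$ has zero coefficient on odd permutations (using $R_{T_k}=C_{T_k'}$), and then proves these sums form a set of $d_\lambda$ pairwise orthogonal primitive idempotents of $\mathbb{F}_qA_n$ via an isomorphism of $\mathbb{F}_qA_n$-modules. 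Your argument is shorter and more conceptual, but it carries the burden (which you correctly identify and address) of justifying that the rational character-formula idempotents reduce mod $p$ to the centrally primitive idempotents of $\mathbb{F}_qS_n$ and $\mathbb{F}_qA_n$; this is fine here because $p>n$ forces the decomposition matrix to be the identity, the character values are integers, and the relevant components $J_\lambda$, $J_{\lambda'}$, $I_\gamma$ are all split matrix algebras as recorded in (\ref{decSn}) and (\ref{decAn}). What the paper's longer argument buys in exchange is an explicit decomposition of $e_\lambda$ into orthogonal primitive idempotents of $\mathbb{F}_qA_n$ built from standard tableaux, which is constructive information not visible from the character formula, and which avoids any appeal to reduction mod $p$ of characteristic-zero idempotents. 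One small point to make fully rigorous in your version: when you quote the formula $e_i=\frac{\chi_i(1)}{n!/2}\sum_{g\in A_n}\chi_i(g^{-1})g$ for $A_n$, you should note explicitly that it yields a centrally primitive idempotent of $\mathbb{F}_qA_n$ (rather than of an extension) precisely because $\chi_\lambda^{S_n}|_{A_n}$ is absolutely irreducible with values in the prime field when $\lambda\neq\lambda'$, so its Galois orbit is a singleton and its component is split; for the self-conjugate partitions this formula would have to be replaced by a Galois-orbit sum, which is exactly why the theorem is restricted to $\gamma\in\Gamma$.
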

\begin{proof} Because of the proof of Theorem \ref{lemaorto} we know that
$$e_\lambda^{S_n} = \sum_{i=1}^{d_\lambda}  a_ie_{T_i}$$
where $a_1e_{T_1}=e_{T_1}$ and $a_ke_{T_k} = (1-e_{T_1})\ldots (1-e_{T_{k-1}})e_{T_k} \in \mathbb{F}_qS_n$, with $k=1,\ldots,d_\lambda$. Analogously, 
$$e_{\lambda^\prime}^{S_n} = \sum_{i=1}^{d_\lambda} \overline{a}_i\overline{e}_{T_i^\prime}$$
where $\overline{a}_1\overline{e}_{T_1^\prime}=\overline{e}_{T_1^\prime}$ and $\overline{a}_k\overline{e}_{T_k^\prime} = (1-\overline{e}_{T_1^\prime})\ldots (1-\overline{e}_{T_{k-1}^\prime})\overline{e}_{T_k^\prime} \in \mathbb{F}_qS_n$, with $k=1,\ldots,d_\lambda$. Since $T_k$ and $T_k^\prime$ are conjugated tableaux, for all $k$, then $R_{T_k}=C_{T_k^\prime}$ and $R_{T_k^\prime}=C_{T_k}$ and so $a_ke_{T_k}+\overline{a}_k\overline{e}_{T_k^\prime}$ has zero coefficient multiplying every odd permutation. This proves that $a_ke_{T_k}+\overline{a}_k\overline{e}_{T_k^\prime}\in \mathbb{F}_qA_n$ for $k=1,\ldots,d_\lambda$ and so $e_\lambda^{S_n}+e_{\lambda^\prime}^{S_n}\in \mathbb{F}_qA_n$.

Now we claim that $\Omega = \{a_1e_{T_1}+\overline{a}_1\overline{e}_{T_1^\prime},\ldots, a_{d_\lambda}e_{T_{d_\lambda}}+\overline{a}_{d_\lambda}\overline{e}_{T_{d_\lambda}^\prime}\}$ is a set of pairwise orthogonal primitive idempotents. Indeed, for each $k=1,\ldots,d_\lambda$, the projection $f:\mathbb{F}_qA_n(a_ke_{T_k}+\overline{a}_k\overline{e}_{T_k^\prime})\longrightarrow \mathbb{F}_qS_n e_{T_k}$ is an epimorphism of $\mathbb{F}_qA_n$-modules once $\chi_\lambda^{S_n}|_{A_n}$ is irreducible (and, so, $\mathbb{F}_qS_n e_{T_k}$ is irreducible over $\mathbb{F}_qA_n$). Since $\mathbb{F}_qS_n\overline{e}_{T_k^\prime}$ is also an irreducible $\mathbb{F}_qA_n$-module, if we suppose that $f$ is not an injection, then $\mathbb{F}_qS_n \overline{e}_{T_k^\prime} =\mathbb{F}_qA_n \overline{a}_k\overline{e}_{T_k^\prime} \subseteq \mathbb{F}_qA_n (a_ke_{T_k}+\overline{a}_k\overline{e}_{T_k^\prime})\subseteq \mathbb{F}_qA_n$. This implies that $\overline{e}_{T_k^\prime} \in \mathbb{F}_qA_n$. If this occurs, for every $p\in R_{T_k}$ and $q\in C_{T_k}$ such that $qp\in S_n\backslash A_n$, there exist $p^\prime\in R_{T_k}$ and $q^\prime \in C_{T_k}$ satisfying $qp=q^\prime p^\prime$ and $sgn(p)=-sgn(p^\prime)$. So $(q^\prime)^{-1}q=p^\prime p^{-1} \in R_{T_k}\cap C_{T_k}=\{1\}$. This implies $p^\prime = p$ and $q^\prime = q$, which is a contradiction. Therefore, $f$ is injective, that is, $f$ is an isomorphism of $\mathbb{F}_qA_n$-modules. This shows that $\mathbb{F}_qA_n (a_ke_{T_k}+\overline{a}_k\overline{e}_{T_k^\prime})$ is $\mathbb{F}_qA_n$-irreducible. Then the elements of $\Omega$ are primitive idempotents. All that remains is to show that the elements of $\Omega$ are pairwise orthogonal. In fact, since $e_{T_i}e_{T_j}=0$ if $i>j$ and each $a_i e_{T_i}$ is primitive in $\mathbb{F}_qS_n$ for all $i$, then $e_{T_i} a_{i} e_{T_i}=e_{T_i}$ and $a_ke_{T_k}=a_k e_{T_k} e_\lambda^{S_n} = \sum_{k=1}^{d_\lambda} (a_ie_i)(a_ke_k)$. Hence $\sum_{k=1,k\neq i}^{d_\lambda} (a_i e_{T_i})(a_k e_{T_k}) = 0$ implies $(a_ie_{T_i})(a_j e_{T_j}) = 0$ for $i\neq j$. Analogously $(\overline{a}_i\overline{e}_{T_i^\prime})(\overline{a}_j\overline{e}_{T_j^\prime})=0$ if $i\neq j$. Consequently $(a_ie_{T_i}+\overline{a}_i\overline{e}_{T_i^\prime})(a_je_{T_j}+\overline{a}_j\overline{e}_{T_j^\prime}) = 0$ for all $i\neq j$. This confirms that $\Omega$ is a set of pairwise orthogonal primitive idempotents.

Finally, since $\lambda\neq \lambda^\prime$, then $e_\lambda=e_\lambda^{S_n}+e_{\lambda^\prime}^{S_n}$ is a central idempotent of $\mathbb{F}_qA_n$ which is a sum of $\chi_\lambda^{A_n}(1)=\chi_\lambda^{S_n}(1)=d_\lambda$ orthogonal primitive idempotents (the elements of $\Omega$). This shows that $e_\lambda$ is a centrally primitive idempotent associated to $I_\gamma$, where $\gamma=\{\lambda,\lambda^\prime\}$.
\end{proof}

\begin{obs} In the case where $\lambda=\lambda^\prime\not\in \Delta$ the central idempotent $e_{\lambda}^{S_n}$ belongs to $\mathbb{F}_qA_n$ once $e_\lambda^{S_n}=e_{\lambda^\prime}^{S_n}$. So we can check whether this idempotent is primitive or not calculating the dimension of the ideal $\mathbb{F}_qA_n e_\lambda^{S_n}$ over $\mathbb{F}_q$. If this dimension is equal to $d_\lambda$, then $e_\lambda^{S_n}$ is the centrally primitive idempotent associated to $I_\lambda$, that is, the generator of $I_\lambda$.
\end{obs}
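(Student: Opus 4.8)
The statement packages two assertions: that for a self-conjugate partition $\lambda=\lambda'$ the idempotent $e_\lambda^{S_n}$ already lies in $\mathbb{F}_qA_n$, and that a single dimension count then identifies it as the centrally primitive idempotent generating the block $I_\lambda$ when $\lambda\notin\Delta$. The plan is to obtain the membership from the sign-twisting automorphism of $\mathbb{F}_qS_n$, and then to place the two-sided ideal $\mathbb{F}_qA_n e_\lambda^{S_n}$ inside the decomposition (\ref{decAn}).

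For the membership I would work with the $\mathbb{F}_q$-algebra automorphism $\psi\colon\mathbb{F}_qS_n\to\mathbb{F}_qS_n$ determined by $\psi(g)=sgn(g)\,g$; it is multiplicative because $sgn$ is, and since $p=char(\mathbb{F}_q)>n$ is odd, the relation $sgn(g)a_g=a_g$ forces $a_g=0$ on every odd $g$, so that the fixed subalgebra of $\psi$ is exactly $\mathbb{F}_qA_n$. A short computation from (\ref{idempotente}), using $sgn(qp)=sgn(q)\,sgn(p)$ together with $R_{T'}=C_T$ and $C_{T'}=R_T$, gives $\psi(e_{T_i})=\overline{e}_{T_i'}$ for every standard tableau; applying $\psi$ to the product formula of Theorem \ref{lemaorto} then yields $\psi(e_\lambda^{S_n})=e_{\lambda'}^{S_n}$. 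Since $\lambda=\lambda'$ gives $J_\lambda=J_{\lambda'}$ and hence $e_\lambda^{S_n}=e_{\lambda'}^{S_n}$, I conclude $\psi(e_\lambda^{S_n})=e_\lambda^{S_n}$, i.e. $e_\lambda^{S_n}$ is $\psi$-fixed, which is precisely $e_\lambda^{S_n}\in\mathbb{F}_qA_n$. (Equivalently, $\lambda=\lambda'$ forces $\chi_\lambda^{S_n}=\chi_\lambda^{S_n}\cdot sgn$, so $\chi_\lambda^{S_n}$ vanishes on $S_n\setminus A_n$ and the character formula for $e_\lambda^{S_n}$ is already supported on $A_n$.)

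For the second assertion I would first observe that $e_\lambda^{S_n}$, being central in $\mathbb{F}_qS_n$ and lying in $\mathbb{F}_qA_n$, is central in $\mathbb{F}_qA_n$; thus $\mathbb{F}_qA_n e_\lambda^{S_n}$ is a two-sided ideal, equal to the sum of the simple blocks of (\ref{decAn}) on which $e_\lambda^{S_n}$ acts nontrivially. These are exactly the blocks carrying the irreducible constituents of $\chi_\lambda^{S_n}|_{A_n}$, and for $\lambda=\lambda'\notin\Delta$ this is the single block $I_\lambda\cong M_{d_\lambda/2}(\mathbb{F}_{q^2})$. Hence $\mathbb{F}_qA_n e_\lambda^{S_n}\subseteq I_\lambda$, and as $I_\lambda$ is simple its only nonzero central idempotent is its identity; so $e_\lambda^{S_n}$ generates $I_\lambda$, and is centrally primitive, exactly when $\mathbb{F}_qA_n e_\lambda^{S_n}=I_\lambda$. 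Since one $\mathbb{F}_q$-space sits inside the other, this equality is detected by $\dim_{\mathbb{F}_q}\mathbb{F}_qA_n e_\lambda^{S_n}=\dim_{\mathbb{F}_q}I_\lambda=2(d_\lambda/2)^2=d_\lambda^2/2$, a number that reduces to $d_\lambda$ exactly when $d_\lambda=2$, as in the examples of Section \ref{sec3}.

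The step I expect to be delicate is the identification $\mathbb{F}_qA_n e_\lambda^{S_n}=I_\lambda$: a priori $\chi_\lambda^{S_n}|_{A_n}$ could break into two $\mathbb{F}_q$-rational constituents, in which case $e_\lambda^{S_n}=e_{\lambda+}+e_{\lambda-}$ would fail to be primitive and the ideal would be $I_{\lambda+}\oplus I_{\lambda-}$. What excludes this is precisely the hypothesis $\lambda\notin\Delta$, i.e. $\sqrt{p_\lambda}\notin\mathbb{F}_q$: by Theorem \ref{theoremAn} the two absolutely irreducible halves are then Galois-conjugate over $\mathbb{F}_q$ and fuse into the single $\mathbb{F}_{q^2}$-block $I_\lambda$. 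Once this fusion is in place, the dimension count serves as the concrete certificate that $e_\lambda^{S_n}$ fills $I_\lambda$ and is its centrally primitive idempotent.
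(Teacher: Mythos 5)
The paper offers no actual proof of this remark, so the comparison is against its bare assertions. Your membership argument is a correct filling-in of the paper's terse ``once $e_\lambda^{S_n}=e_{\lambda^\prime}^{S_n}$'': the sign-twist $\psi(g)=sgn(g)g$ is an algebra automorphism whose fixed subalgebra is exactly $\mathbb{F}_qA_n$ (using $p>n\geq 2$, so $p$ is odd), the computation $\psi(e_{T_i})=\overline{e}_{T_i^\prime}$ checks out against (\ref{idempotente}), and applying $\psi$ to Theorem \ref{lemaorto} gives $\psi(e_\lambda^{S_n})=e_{\lambda^\prime}^{S_n}=e_\lambda^{S_n}$; the parenthetical character-support argument ($\chi_\lambda^{S_n}$ vanishes on odd permutations when $\lambda=\lambda^\prime$) is an equally valid shortcut. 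Your second step is also sound, and in fact it already finishes the job: once $e_\lambda^{S_n}$ is a nonzero central idempotent of $\mathbb{F}_qA_n$ annihilating every block of (\ref{decAn}) not lying over $\chi_\lambda^{S_n}|_{A_n}$, and once $\lambda\notin\Delta$ forces those constituents to fuse into the single simple block $I_\lambda\cong M_{d_\lambda/2}(\mathbb{F}_{q^2})$, simplicity of $I_\lambda$ gives $e_\lambda^{S_n}=1_{I_\lambda}$ with no dimension count whatsoever.

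You correctly caught that $\dim_{\mathbb{F}_q}I_\lambda=d_\lambda^2/2$, which agrees with the paper's stated value $d_\lambda$ only when $d_\lambda=2$ --- precisely the case in every example of Section \ref{sec3}, which is why the discrepancy is invisible there. But your closing sentence concedes too much to the remark: the dimension count is not a ``concrete certificate'' of anything, because in the split case $\lambda\in\Delta$ the ideal $\mathbb{F}_qA_ne_\lambda^{S_n}=I_{\lambda+}\oplus I_{\lambda-}$ has the \emph{same} $\mathbb{F}_q$-dimension $2(d_\lambda/2)^2=d_\lambda^2/2$ (indeed $\mathbb{F}_qS_ne_\lambda^{S_n}=J_\lambda$ splits into even- and odd-supported halves of equal dimension, independently of $\Delta$). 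Hence no computation of $\dim_{\mathbb{F}_q}\mathbb{F}_qA_ne_\lambda^{S_n}$ can ``check whether this idempotent is primitive or not,'' with either $d_\lambda$ or $d_\lambda^2/2$ as the benchmark. The paper's own data furnishes a counterexample to the remark's test as literally stated: in $\mathbb{F}_7A_3$ with $\lambda=(2,1)\in\Delta$ one has $\dim_{\mathbb{F}_7}\mathbb{F}_7A_3e_\lambda^{S_3}=2=d_\lambda$, yet $e_\lambda^{S_3}=e_2+e_3$ is not primitive. Primitivity is decided by the $\Delta$-condition itself, i.e.\ whether $\sqrt{p_\lambda}\in\mathbb{F}_q$ (Theorem \ref{theoremAn}), or equivalently by testing whether the ideal contains a nontrivial central idempotent --- facts your ``delicate step'' paragraph essentially identifies; you should have pushed that observation to its conclusion and said plainly that the remark's numerical criterion is vacuous, while under its standing hypothesis $\lambda\notin\Delta$ your own simplicity argument renders it redundant.
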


\section{Minimal group codes in $\mathbb{F}_qA_n$}\label{sec3}

Let $\mathbb{F}_q$ be the finite field of cardinality $q=p^f$, where $p$ is a prime number and $f\geq 1$ is a natural number. Let $G$ be a finite group such that $p$ does not divides $|G|$, which means that $\mathbb{F}_qG$ is semisimple. Consider $H$ a subgroup of $G$. If $|H|$ is invertible in $\mathbb{F}_q$, then
$\hat{H}:=|H|^{-1}\sum_{h\in H} h$ is an idempotent in $\mathbb{F}_qG$. Additionally, if $H\neq \{1\}$ is a normal subgroup of $G$, then $\hat{H}$ is central \cite[Lemma 3.6.6]{milies2002}. In the latter case, $\hat{H}$ is a sum of centrally primitive idempotents, which are called its \textit{constituents}. If $e$ is a constituent of $H$, then $e\hat{H}=e$ and the code $\mathbb{F}_qGe$ is a repetition code (see \cite{idem17}). In turn, if $e$ is a not a constituent of $H$, then $e\hat{H}=0$, which can produce more interesting codes like simplex binary codes of dimension $k$ and length $n=2^k-1$ (see \cite[Theorem 4.4]{idem17}). This motivates the following definition:

\begin{defin}[\cite{idem17}, Definition 2.2] A centrally primitive idempotent $e\in\mathbb{F}_qG$ is an \textbf{essential idempotent} if $e\hat{H}=0$, for every subgroup $H\neq \{1\}$ of $G$. A minimal ideal of $\mathbb{F}_qG$ is an \textbf{essential ideal} if it is generated by an essential idempotent.
\end{defin}

In the context of coding theory, we can say that a minimal code $C$ in $\mathbb{F}_qG$ is an \textbf{essential code} if $C$ is an essential ideal of $\mathbb{F}_qG$. In \cite[Proposition 2.3]{idem17} it is shown that a centrally primitive idempotent $e$ of $\mathbb{F}_qG$ is essential if and only if the map $\phi: G \longrightarrow Ge$ given by $\phi(g)=ge$, for all $g\in G$, is an isomorphism. This implies the following theorem:

\begin{teorema}\label{teoremaessen} If $G$ is a simple group and the characteristic of $\mathbb{F}_q$ does not divide the order of $G$, then every minimal code of $\mathbb{F}_qG$ is essential.
\end{teorema}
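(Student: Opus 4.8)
The plan is to deduce the statement from the characterization recorded just above, namely \cite[Proposition 2.3]{idem17}: a centrally primitive idempotent $e\in\mathbb{F}_qG$ is essential if and only if the map $\phi\colon G\to Ge$, $\phi(g)=ge$, is an isomorphism. So I would start from an arbitrary minimal code $C=\mathbb{F}_qGe$, where $e$ is the centrally primitive idempotent generating it, and analyse this single map $\phi$, letting the simplicity of $G$ do the work through the normal subgroup $\ker\phi$.

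First I would observe that $Ge$ is a group under the product of $\mathbb{F}_qG$ and that $\phi$ is a surjective group homomorphism. Indeed, since $e$ is central and idempotent, $\phi(g)\phi(h)=gehe=ghe^2=ghe=\phi(gh)$, the element $e=\phi(1)$ is an identity for $Ge$, and $\phi(g^{-1})$ is inverse to $\phi(g)$; surjectivity is immediate from the definition $Ge=\phi(G)$. Hence $\ker\phi=\{g\in G:ge=e\}$ is a normal subgroup of $G$. Now I would bring in the hypothesis that $G$ is simple: $\ker\phi$ must be either $\{1\}$ or $G$. If $\ker\phi=\{1\}$, then $\phi$ is injective and therefore an isomorphism onto $Ge$, so \cite[Proposition 2.3]{idem17} yields at once that $e$ is essential and $C$ is an essential code.

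The delicate step, and the one I expect to be the real obstacle, is the remaining case $\ker\phi=G$. Here $ge=e$ for every $g\in G$; writing $e=\sum_{g\in G}a_gg$ and comparing coefficients in the identities $he=e$ forces all coefficients $a_g$ to coincide, whence $e$ is a scalar multiple of $\sum_{g\in G}g$, and idempotency identifies it as the principal idempotent $e=\hat{G}$. Thus the only minimal code for which this argument does not already produce an essential idempotent is the one-dimensional repetition code $\mathbb{F}_qG\hat{G}$; for every minimal code other than this trivial one, simplicity forces $\ker\phi=\{1\}$ and the code is essential. The crux of the write-up is therefore this coefficient computation, together with the remark that $\hat{G}$ is exactly the idempotent of the trivial representation, the rest of the proof being a direct application of simplicity to the normal subgroup $\ker\phi$ and a citation of \cite[Proposition 2.3]{idem17}.
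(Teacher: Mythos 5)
Your proposal follows the same route as the paper: both arguments rest on the characterization from \cite[Proposition 2.3]{idem17} and apply simplicity of $G$ to the epimorphism $\phi\colon G\to Ge$. The difference is that you carry the argument out correctly where the paper does not. The paper's proof consists of the single assertion ``$\phi$ is an epimorphism; since $G$ is simple, $\phi$ is an isomorphism,'' which silently ignores that an epimorphism from a simple group may also have kernel equal to all of $G$. You isolate exactly that case, and your coefficient computation correctly identifies it: $\ker\phi=G$ forces $e=\hat{G}$, the principal idempotent, whose minimal code is the one-dimensional repetition code. Moreover, this case is not merely one your argument fails to cover --- it is a genuine exception to the statement. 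For $e=\hat{G}$ and any subgroup $H\neq\{1\}$ one has $\hat{G}\hat{H}=\hat{G}\neq 0$, so $\hat{G}$ is never essential when $G$ is nontrivial; equivalently, $\phi$ collapses $G$ to the trivial group $\{\hat{G}\}$ and is not an isomorphism. Thus the theorem as stated is false for the component generated by $\hat{G}$, and what you actually prove --- every minimal code of $\mathbb{F}_qG$ \emph{other than the repetition code} $\mathbb{F}_qG\hat{G}$ is essential --- is the correct formulation. (This is also consistent with the paper's own examples, which describe $\mathbb{F}_5A_3e_1$ and its analogues as repetition codes rather than essential ones.) So your write-up should be kept exactly as you outline it, with the repetition code explicitly excluded from the conclusion; do not try to close that last case, because it cannot be closed.
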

\begin{proof} Let $I\neq\{0\}$ be a minimal ideal of $\mathbb{F}_qG$. Since $\mathbb{F}_qG$ is semisimple, there exists a centrally primitive idempotent $e$ such that $I=\mathbb{F}_qGe$. Obviously the map $\phi:G\longrightarrow Ge$ given by $\phi(g)=ge$, for all $g\in G$, is an epimorphism. Since $G$ is simple, then $\phi$ is an isomorphism. Then, by the fact mentioned in latter paragraph, $I$ is an essential ideal.
\end{proof}

From now on consider $q=p^f$, where $p$ is a prime number and $f\geq 1$ is a positive integer number, and $n<p$ a positive integer number. In this case, $\mathbb{F}_qS_n$ and $\mathbb{F}_qA_n$ are semisimple group algebras. Theorem \ref{teoremaessen} implies the following evident consequence:

\begin{corol}\label{corolessen} If $n=3$ or $n\geq 5$, then every minimal code of the semisimple group algebra $\mathbb{F}_qA_n$ is essential.
\end{corol}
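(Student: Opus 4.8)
The plan is to invoke Theorem \ref{teoremaessen} directly, after verifying that its two hypotheses hold for $G = A_n$ under the standing conditions. Thus the whole argument reduces to two independent checks: that $A_n$ is a simple group for the values of $n$ in question, and that the characteristic $p$ of $\mathbb{F}_q$ does not divide $|A_n|$.

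First I would recall the simplicity of the alternating groups. For $n=3$ the group $A_3$ is cyclic of order $3$, hence simple since $3$ is prime; for $n\geq 5$ the simplicity of $A_n$ is the classical theorem. The cases $n=1,2,4$ are deliberately omitted: $A_1$ and $A_2$ are trivial, while $A_4$ fails to be simple because it contains the Klein four-group as a proper nontrivial normal subgroup. This is precisely why the statement restricts to $n=3$ or $n\geq 5$, and it is the only point in the argument that requires genuine care in bookkeeping.

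Next I would check the characteristic condition. By the standing hypothesis $n<p$, the prime $p$ is strictly larger than each of $2,3,\ldots,n$ and therefore divides none of them; consequently $p\nmid n!$ and a fortiori $p\nmid n!/2=|A_n|$. This simultaneously confirms that $\mathbb{F}_qA_n$ is semisimple (as already observed at the start of Section \ref{sec2}) and that the characteristic does not divide $|A_n|$, so both hypotheses of Theorem \ref{teoremaessen} are in place.

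With both conditions verified, Theorem \ref{teoremaessen} applies with $G=A_n$ and yields at once that every minimal code of $\mathbb{F}_qA_n$ is essential. I do not expect any real obstacle: the substantive content has already been established in Theorem \ref{teoremaessen}, whose proof uses only that a surjection $\phi\colon G\to Ge$ of a simple group must be injective. The corollary is merely the instantiation of that theorem at the family of simple groups $A_n$, and the proof is therefore a one-line deduction once the range of $n$ giving simple $A_n$ is correctly identified.
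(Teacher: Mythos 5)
Your proposal is correct and follows exactly the paper's own argument: the corollary is a direct instantiation of Theorem \ref{teoremaessen} at $G=A_n$, using the simplicity of $A_n$ for $n=3$ or $n\geq 5$ and the standing hypothesis $n<p$ to guarantee semisimplicity. The additional verifications you spell out (why $A_4$ is excluded, why $p\nmid n!/2$) are left implicit in the paper but are the same in substance.
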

\begin{proof} It follows directly from Theorem \ref{teoremaessen}, once $A_n$ is simple for $n=3$ or $n\geq 5$.
\end{proof}

Note that Corollary \ref{corolessen} guarantees that all ideals $I_\gamma$, $I_\lambda$ and $I_{\lambda\pm}$ in the decomposition (\ref{decAn}) are essential codes of $\mathbb{F}_qA_n$ for $n=3$ or $n\geq 5$.

In the following we present some examples of codes in $\mathbb{F}_qA_n$ with hte calculations made in software \textit{SageMath} \cite{sage}.

\begin{ex} Consider $q=5$ and $n=3$. A complete set of centrally primitive idempotents of $\mathbb{F}_5S_3$ is given by $\{e_1,e_2,e_3\}$, where 
\begin{equation*}
\begin{split}
e_1^{S_3}=()+(2,3)+(1,2)+(1,2,3)+(1,3,2)+(1,3),    \\
e_2^{S_3}=4() + 3(1,2,3) + 3(1,3,2),\\
e_3^{S_3}=() + 4(2,3) + 4(1,2) + (1,2,3) + (1,3,2) + 4(1,3).
\end{split}
\end{equation*}
The idempotents $e_1$, $e_2$ and $e_3$ are associated, respectively, to the partitions $\lambda_1=(3)$, $\lambda_2=(2,1)$ and $\lambda_1^\prime = (1,1,1)$. Then
$$\mathbb{F}_5S_3 = \mathbb{F}_5 S_3 e_1^{S_3}\oplus \mathbb{F}_5 S_3 e_2^{S_3} \oplus \mathbb{F}_5 S_3 e_3^{S_3} \cong \mathbb{F}_5\oplus M_2(\mathbb{F}_5)\oplus \mathbb{F}_5.$$
The codes $\mathbb{F}_5S_3 e_1^{S_3}$ and $\mathbb{F}_5 S_3 e_3^{S_3}$ are repetition codes (they are $5$-ary $[6,1,6]$-codes) and the minimal code $\mathbb{F}_5S_3 e_2^{S_3}$ is a $5$-ary $[6,4,2]$-code. In turn, the Wedderburn-Artin decomposition of $\mathbb{F}_5A_3$ is given by
$$\mathbb{F}_5A_3 = \mathbb{F}_5A_3 e_1 \oplus \mathbb{F}_5A_3 e_2 \cong \mathbb{F}_5 \oplus \mathbb{F}_{5^2}.$$
A set of centrally primitive idempotents of this algebra is given by $\{e_1,e_2\}$, where $e_1=e_1^{S_3}+e_3^{S_3}$ and $e_2 = e_2^{S_3}$, once $\mathbb{F}_5A_3e_2$ has dimension $2=d_{\lambda_2}$. The code $\mathbb{F}_5A_3 e_1$ is a repetition code (it is a $5$-ary $[3,1,3]$-code) and the code $\mathbb{F}_5A_3e_2$ is a $[3,2,2]$, which is the best linear code with length $n=3$ and dimension $k=2$ over $\mathbb{F}_5$ \cite{codetable}. The generator matrix of the last code is given by
$$\left(\begin{array}{rrr}
1 & 0 & 4 \\
0 & 1 & 4
\end{array}\right).$$
\end{ex}

\begin{ex} For $q=7$ and $n=3$, the Wedderburn-Artin decomposition of $\mathbb{F}_7S_3$ and $\mathbb{F}_7A_3$ are given, respectively, by
$$\mathbb{F}_7 S_3 = \mathbb{F}_7 S_3 e_1^{S_3} \oplus \mathbb{F}_7 S_3 e_2^{S_3}\oplus \mathbb{F}_7 S_3 e_3^{S_3} \cong \mathbb{F}_7\oplus M_2(\mathbb{F}_7)\oplus \mathbb{F}_7$$
and
$$\mathbb{F}_7 A_3 = \mathbb{F}_7 e_1\oplus \mathbb{F}_7 e_2 \oplus \mathbb{F}_7 e_3 \cong \mathbb{F}_7\oplus \mathbb{F}_7\oplus \mathbb{F}_7,$$
where
\begin{equation*}
\begin{split}
e_1^{S_3} = 6() + 6(2,3) + 6(1,2) + 6(1,2,3) + 6(1,3,2) + 6(1,3),    \\
e_2^{S_3}=3() + 2(1,2,3) + 2(1,3,2),\\
e_3^{S_3}=6() + (2,3) + (1,2) + 6(1,2,3) + 6(1,3,2) + (1,3),\\
e_1 = e_1^{S_3}+e_3^{S_3},\\
e_2 = 5() + 6(1,2,3) + 3(1,3,2),\\
e_3 = 5() + 3(1,2,3) + 6(1,3,2).
\end{split}
\end{equation*}
The codes $\mathbb{F}_7 S_3 e_1^{S_3}$ and $\mathbb{F}_7 S_3 e_1^{S_3}$ are trivial $7$-ary $[6,1,6]$-codes and all the minimal codes of $\mathbb{F}_7A_3$ are trivial $7$-ary $[3,1,3]$-codes. The code $\mathbb{F}_7S_3 e_2^{S_3}$ is a $7$-ary $[6,4,2]$-code.
\end{ex}

\begin{ex} Consider $n=4$ and $q=5$. In this case, the Wedderburn-Artin decomposition of $\mathbb{F}_5A_4$ is given by
$$\mathbb{F}_5 A_4 = \mathbb{F}_5 A_4 e_1 \oplus \mathbb{F}_5A_4 e_2 \oplus \mathbb{F}_5 A_4e_3 \cong \mathbb{F}_5\oplus M_3(\mathbb{F}_5)\oplus \mathbb{F}_{25}$$
where $e_1=3 + 3(2,3,4) + 3(2,4,3) + 3(1,2)(3,4) + 3(1,2,3) + 3(1,2,4) + 3(1,3,2) + 3(1,3,4) + 3(1,3)(2,4) + 3(1,4,2) + 3(1,4,3) + 3(1,4)(2,3)$ is the idempotent associated to the partition $\lambda_1=(4)$, the idempotent $e_2 = 2 + (1,2)(3,4) + (1,3)(2,4) + (1,4)(2,3)$ is associated to the partition $\lambda_2=(3,1)$ and $e_3 = () + 2(2,3,4) + 2(2,4,3) + (1,2)(3,4) + 2(1,2,3) + 2(1,2,4) + 2(1,3,2) + 2(1,3,4) + (1,3)(2,4) + 2(1,4,2) + 2(1,4,3) + (1,4)(2,3)$ is associated to the partition $\lambda_3 = (2,2)$. The minimal code $\mathbb{F}_5 A_4 e_1$ is a repetition $5$-ary code $[3,1,3]$, the minimal code $\mathbb{F}_5 A_4 e_2$ is a $5$-ary $[12,9,2]$-code and the minimal code $\mathbb{F}_5 A_4 e_3$ is a $5$-ary $[12,2,8]$-code. The generator matrices of the last two codes are given, respectively, by
$$M_2=\left(\begin{array}{rrrrrrrrrrrr}
0 & 0 & 0 & 0 & 0 & 0 & 0 & 0 & 0 & 0 & 1 & 4 \\
0 & 0 & 0 & 0 & 0 & 0 & 0 & 0 & 0 & 1 & 0 & 4 \\
0 & 0 & 0 & 0 & 0 & 0 & 0 & 0 & 1 & 0 & 0 & 4 \\
0 & 0 & 0 & 0 & 0 & 0 & 1 & 4 & 0 & 0 & 0 & 0 \\
0 & 0 & 0 & 0 & 0 & 1 & 0 & 4 & 0 & 0 & 0 & 0 \\
0 & 0 & 0 & 0 & 1 & 0 & 0 & 4 & 0 & 0 & 0 & 0 \\
0 & 0 & 1 & 4 & 0 & 0 & 0 & 0 & 0 & 0 & 0 & 0 \\
0 & 1 & 0 & 4 & 0 & 0 & 0 & 0 & 0 & 0 & 0 & 0 \\
1 & 0 & 0 & 4 & 0 & 0 & 0 & 0 & 0 & 0 & 0 & 0
\end{array}\right)$$
and
$$M_3 = \left(\begin{array}{rrrrrrrrrrrr}
1 & 1 & 1 & 1 & 0 & 0 & 0 & 0 & 4 & 4 & 4 & 4 \\
0 & 0 & 0 & 0 & 1 & 1 & 1 & 1 & 4 & 4 & 4 & 4
\end{array}\right).$$
We remark that the best minimum distance for $5$-ary codes with length $n=12$ and dimension $k=9$ is $d=3$ and that the best minimum distance for $5$-ary codes with $n=12$ and $k=2$ is $d=10$ \cite{codetable}.
\end{ex}

\begin{ex} Consider $n=4$ and $q=7$. The Wedderburn-Artin decomposition in this case is given by
$$\mathbb{F}_7 A_4 = \mathbb{F}_7 A_4 e_1\oplus \mathbb{F}_7 A_4 e_2\oplus \mathbb{F}_7 A_4 e_3 \oplus \mathbb{F}_7 A_4 e_4 \cong \mathbb{F}_7\oplus M_3(\mathbb{F}_7)\oplus \mathbb{F}_7\oplus \mathbb{F}_7$$
where $e_1=3 + 3(2,3,4) + 3(2,4,3) + 3(1,2)(3,4) + 3(1,2,3) + 3(1,2,4) + 3(1,3,2) + 3(1,3,4) + 3(1,3)(2,4) + 3(1,4,2) + 3(1,4,3) + 3(1,4)(2,3)$ is the idempotent associated to $\lambda_1=(4)$, the idempotent $e_2=6 + 5(1,2)(3,4) + 5(1,3)(2,4) + 5(1,4)(2,3)$ is associated to $\lambda_2 = (3,1)$ and the idempotents $e_3=3 + 5(2,3,4) + 6(2,4,3) + 3(1,2)(3,4) + 6(1,2,3) + 5(1,2,4) + 5(1,3,2) + 6(1,3,4) + 3(1,3)(2,4) + 6(1,4,2) + 5(1,4,3) + 3(1,4)(2,3)$ and $e_4=3 + 6(2,3,4) + 5(2,4,3) + 3(1,2)(3,4) + 5(1,2,3) + 6(1,2,4) + 6(1,3,2) + 5(1,3,4) + 3(1,3)(2,4) + 5(1,4,2) + 6(1,4,3) + 3(1,4)(2,3)$ are the idempotents associated to the partition $\lambda_3=(2,2)$ (observe that $\lambda_3\in \Delta$). The minimal codes $\mathbb{F}_7 A_4 e_i$, for $i=1,3,4$, are $7$-ary trivial codes $[4,1,4]$. In turn, the minimal code $\mathbb{F}_7 A_4 e_2$ has generator matrix given by
$$
\left(\begin{array}{rrrrrrrrrrrr}
0 & 0 & 0 & 0 & 0 & 0 & 0 & 0 & 0 & 0 & 1 & 6 \\
0 & 0 & 0 & 0 & 0 & 0 & 0 & 0 & 0 & 1 & 0 & 6 \\
0 & 0 & 0 & 0 & 0 & 0 & 0 & 0 & 1 & 0 & 0 & 6 \\
0 & 0 & 0 & 0 & 0 & 0 & 1 & 6 & 0 & 0 & 0 & 0 \\
0 & 0 & 0 & 0 & 0 & 1 & 0 & 6 & 0 & 0 & 0 & 0 \\
0 & 0 & 0 & 0 & 1 & 0 & 0 & 6 & 0 & 0 & 0 & 0 \\
0 & 0 & 1 & 6 & 0 & 0 & 0 & 0 & 0 & 0 & 0 & 0 \\
0 & 1 & 0 & 6 & 0 & 0 & 0 & 0 & 0 & 0 & 0 & 0 \\
1 & 0 & 0 & 6 & 0 & 0 & 0 & 0 & 0 & 0 & 0 & 0
\end{array}\right)
$$
and corresponds to a $7$-ary $[12,9,2]$-code. We remark that the best minimum distance for $7$-ary codes with length $n=12$ and dimension $k=9$ is $d=3$ \cite{codetable}.
\end{ex}


\begin{thebibliography}{99}

\addcontentsline{toc}{chapter}{References}

\bibitem{berman1} S. D. Berman, On the theory of group codes, \textit{Kybernetika}, \textbf{3}, 1~(1967), 31-39.

\bibitem{berman2} S. D. Berman, Semisimple cyclic and Abelian codes II, \textit{Kibernetika}, \textbf{3}, 3~ (1967), 17-23.

\bibitem{bernal2009} J. J. Bernal, A. Del R\'io, J. J. Sim\'on-Pinero, An intrinsical description of group codes, \textit{Designs, Codes and Cryptography}, \textbf{51} (2009), 289-300.

\bibitem{essen2018} G. Chalom, R. A. Ferraz, and F. C. Polcino Milies. Essential idempotents and codes of constant weight. \textit{S\~ao Paulo J. of Math. Sci}, \textbf{11}, 2 (2018), 253-260.

\bibitem{idem17} G. Chalom, R. A. Ferraz, and F. C. Polcino Milies, \textit{Essential idempotents and simplex codes}. 4:181-188, 2017. Journal of Algebra Combinatorics Discrete Structures and Applications.

\bibitem{frp} F. S. Dutra, R. A. Ferraz, and F. C. Polcino Milies. \textit{Semisimple group codes and dihedral codes}. Algebra and Disc. Math., 3 (2009), 28-48.


\bibitem{ferraz07} R. A. Ferraz and F. C. Polcino Milies. \textit{Idempotents in group algebras and minimal abelian codes}. 13(2):382-393, 2007. Finite Fields and Their Applications.

\bibitem{left} R. A. Ferraz, F. C. Polcino Milies and E. Taufer. \textit{Left ideals in matrix rings over finite fields}. Preprint, arXiv:1711.09289, 2017.

\bibitem{giambruno} A. Giambruno and E. Jespers. \textit{Central Idempotents and Units in Rational Group Algebras of Alternating Groups}. International Journal of Algebra and Computation, v.8, n. 4, pp. 467-477, 1998.

\bibitem{codetable} M. Grassl. \textit{Bounds on the minimum distance of linear codes and quantum codes.} Online available at \url{http://www.codetables.de.}
Accessed on 2021-09-14.

\bibitem{james} G. James and A. Kerber. \textit{The representation theory of the symmetric group}. Encyclopedia of Mathematics and its Applications, v. 16, Addison-Wesley, London, 1981.

\bibitem{macwi} F. J. MacWilliams, Binary codes which are ideals in the group algebra of an Abelian group, \textit{Bell System Tech. J.}, \textbf{49} (1970), 987-1011.

\bibitem{milies2002} F. C. Polcino Milies and S. K. Sehgal. \textit{An Introduction to Group Rings}. Algebra and Applications. Springer Netherlands, 2002.

\bibitem{sage} SageMath, the Sage Mathematics Software System (Version 9.2), The Sage Developers, 2020, \url{https://www.sagemath.org}.

\bibitem{weintraub} S. H. Weintraub. \textit{Representation theory of Finite Groups: Algebra and Arithmetic}. Graduate Studies in Mathematics, v. 59, AMS, Providence, Rhode Island, 2003.

\end{thebibliography}
\end{document}